\documentclass[11pt,reqno]{amsart}

\usepackage[T1]{fontenc}
\usepackage[utf8]{inputenc}
\usepackage{lmodern}
\usepackage{amsmath,amssymb,mathtools}
\usepackage{microtype}
\usepackage[colorlinks=true,linkcolor=blue,citecolor=blue,urlcolor=blue]{hyperref}

\newcommand{\R}{\mathbb{R}}

\newcommand{\HS}{\mathrm{HS}}
\newcommand{\vr}{\operatorname{vr}}

\newcommand{\E}{\mathbb{E}}

\theoremstyle{plain}
\newtheorem{theorem}{Theorem}[section]

\newtheorem{lemma}[theorem]{Lemma}

\theoremstyle{remark}
\newtheorem{remark}[theorem]{Remark}

\title[An improved volume-ratio bound]
{An Improved Volume Ratio Bound via Isotropic Positions}

\author[D. Galicer]{Daniel Galicer} \address{Universidad Torcuato Di Tella. Departamento de Matem\'aticas y Estad\'istica. IMAS--CONICET. Av. Figueroa Alcorta 7350, C1428 Buenos Aires, Argentina} \email{daniel.galicer@utdt.edu}

\author[M. Merzbacher]{Mariano Merzbacher}
\address{Departamento de Matem\'atica, Facultad de Ciencias Exactas y Naturales,
Universidad de Buenos Aires, Ciudad Universitaria, Pabell\'on I,
C1428EGA Buenos Aires, Argentina}
\email{mmerzbacher@dm.uba.ar}

\author[D. Pinasco]{Dami\'an Pinasco}
\address{Universidad Torcuato Di Tella. Departamento de Matem\'aticas y Estad\'istica. CONICET. Av. Figueroa Alcorta 7350, C1428 Buenos Aires, Argentina}
\email{dpinasco@utdt.edu}

\subjclass[2020]{Primary 52A23, 52A38, 52A40; Secondary 46B06}
\keywords{Volume ratio, isotropic position, mean gauge, Chevet inequality, random orthogonal operators}

\begin{document}

\begin{abstract} We show that, for every pair of convex bodies $K,L\subset\mathbb R^n$, \[ \operatorname{vr}(K,L)\leq C\sqrt{n\log(n+1)}. \]
The main point is to place $K$ and $L^\circ$
in isotropic position. We then consider a random orthogonal
image of $L$ and control the corresponding operator norm by combining
the isotropic mean-gauge estimate of Bizeul and Klartag with Letwin's
recent dimension-free bound for the third-moment parameter appearing
in their estimate.

Our result improves the bound $\operatorname{vr}(K,L)\leq C\sqrt n\,\log(n+1)$ proved by Giannopoulos and Hartzoulaki, which had remained the best general estimate for nearly two and a half decades.
\end{abstract}

\maketitle

\section{Introduction}

A recurrent problem in asymptotic convex geometry is to approximate a
convex body by affine images of another body. For convex bodies
$K,L\subset\R^n$, their volume ratio is defined by
\begin{equation}\label{eq:def-vr}
 \vr(K,L)=\inf\left\{
 \left(\frac{|K|}{|T(L)|}\right)^{1/n}:
 T(L)\subseteq K
 \right\},
\end{equation}
where $T:\R^n\to\R^n$ is an invertible affine map.
Here $|K|$ denotes $n$-dimensional Lebesgue measure. The quantity in
\eqref{eq:def-vr} is invariant under invertible affine
transformations of its two arguments: if $S$ and $T$ are invertible
maps, then
\begin{equation}\label{eq:independent-invariance}
 \vr(S(K),T(L))=\vr(K,L).
\end{equation}
It also satisfies the following multiplicative triangle inequality: \begin{equation*}\vr(K,L)\leq \vr(K,S)\vr(S,L). \end{equation*}
The classical volume ratio of a convex body $K\subset\R^n$ is the special case $\vr(K):=\vr(K,B_2^n).$ Equivalently, if $\mathcal E_K$ denotes an ellipsoid of maximal volume contained in $K$, then \[ \vr(K)=\left(\frac{|K|}{|\mathcal E_K|}\right)^{1/n}. \] Thus, $\vr(K)$ is an affine invariant that measures how efficiently $K$ can be approximated from within by an ellipsoid. By John's theorem, \[ \vr(K)\leq n, \] and the sharper estimate $\vr(K)\leq\sqrt n$ holds when $K$ is centrally symmetric; see, for instance, \cite{AGM,Ball,PisierVolume}.
Applying the multiplicative triangle inequality with the Euclidean ball as an intermediate body, and using Ball's sharp estimate for the classical volume ratio \cite{Ball} together with the corresponding outer volume-ratio estimate following from Barthe's reverse Brascamp--Lieb inequality \cite{Barthe}, we obtain \[ \vr(K,L) \leq \vr(K,B_2^n)\vr(B_2^n,L) \leq n. \]
John-type decompositions for maximal-volume positions of one convex
body inside another were established in \cite{GPT,GLMP}, leading to
linear-in-$n$ estimates for volume ratios and Banach--Mazur distances.

A major improvement was proved by Giannopoulos and Hartzoulaki
\cite{GH} in 2002:
\begin{equation}\label{eq:GH}
 \vr(K,L)\leq C\sqrt n\,\log(n+1)
 \qquad (K,L\subset\R^n).
\end{equation}
As usual, $C>0$ denotes throughout this article an absolute constant whose value may
change from line to line.

Their proof combines suitable positions of the two bodies with Chevet's
inequality and an unbalanced form of the $MM^*$ estimate, following an
idea of Rudelson \cite{Rudelson}. The estimate \eqref{eq:GH} had been the best general upper bound.

Note that the order $\sqrt n$ is unavoidable in general. For instance, the Euclidean ball
and a simplex give a volume ratio of this magnitude. More generally, building on Gluskin's random-polytope construction \cite{Gluskin}, Khrabrov \cite{Khrabrov} proved that every $n$-dimensional convex body $K$ admits a convex body $L$ with \[ \vr(K,L) \geq C\sqrt{\frac{n}{\log\log(n+2)}}. \]
In \cite{GMP}, we removed the iterated logarithm by showing that $L$ may be chosen as a random polytope satisfying
\[
\operatorname{vr}(K,L)\geq C\sqrt n.
\]
A related lower bound for the volume ratio between projections of convex bodies was obtained in \cite{GMPLitvak}.

For many natural and structurally diverse convex bodies $K$, including Euclidean balls, simplices, unconditional bodies, unit balls of unitarily invariant norms, and unit balls of projective and injective tensor products of $\ell_p$-spaces, one has \[ \vr(K,L)\leq C\sqrt n \] uniformly over all convex bodies $L$; see, for instance, \cite{GMP}. Despite this evidence, it remained open whether the logarithmic loss in the general estimate of Giannopoulos and Hartzoulaki \cite{GH} could be reduced. Our main result replaces this logarithmic factor by its square root.

\begin{theorem}\label{thm:main}
There exists a universal constant $C>0$ such that, for every $n\geq2$ and
every pair of convex bodies $K,L\subset\R^n$,
\begin{equation*}
 \vr(K,L)\leq C\sqrt{n\log(n+1)}.
\end{equation*}
\end{theorem}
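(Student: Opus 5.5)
We follow the strategy sketched in the abstract. By the affine invariance \eqref{eq:independent-invariance}, and after translating so that the centroids of $K$ and of $L^\circ$ sit at the origin, we may put $K$ in isotropic position and $L^\circ$ in isotropic position, with the normalizations $|K|=|L^\circ|=1$. Let $U$ be a random orthogonal map, distributed according to the Haar measure on $O(n)$. Let $t>0$ be the smallest number with $U(L)\subseteq tK$; equivalently,
\[
t=\|U\colon \ell_{L}\to\ell_{K}\|=\sup_{x\in L,\,y\in K^\circ}\langle Ux,y\rangle .
\]
Then $t^{-1}U(L)\subseteq K$, and therefore
\[
\vr(K,L)\le t\,\left(\frac{|K|}{|L|}\right)^{1/n}.
\]
This leaves two tasks: bound $\E t$, and bound $(|K|/|L|)^{1/n}=|L|^{-1/n}$.

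\textbf{Volume term.} Since $L^\circ$ is isotropic with volume one, the Blaschke--Santaló and Bourgain--Milman inequalities give $|L|^{1/n}\simeq |B_2^n|^{2/n}\simeq 1/n$. This holds after choosing the centre of $L$ suitably; the centroid condition for $L^\circ$ corresponds to the Santaló point of $L$. Hence $(|K|/|L|)^{1/n}\simeq n$. So the real task is to show
\[
\E t\le C\sqrt{\log(n+1)}/\sqrt n .
\]

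\textbf{Operator-norm term.} We bound $\E t$ by a Chevet-type inequality for random orthogonal operators, which reduces (up to absolute constants, after comparing Haar measure on $O(n)$ with a Gaussian matrix normalized by $\sqrt n$) to
\[
\E t\lesssim \frac{1}{\sqrt n}\Bigl( R(L)\,w(K^\circ)+R(K^\circ)\,w(L)\Bigr).
\]
Here $R$ denotes the Euclidean circumradius and $w$ the Gaussian mean width. The quantities $R(L)$ and $w(L)=\E\|G\|_{L^\circ}$ are mean-gauge and inradius quantities of the isotropic body $L^\circ$. The same holds for $K$: $w(K^\circ)=\E\|G\|_K$ is the mean gauge of the isotropic body $K$, and $R(K^\circ)$ is the inverse inradius of $K$.

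For an isotropic body $K$ with isotropic constant $L_K$, the centroid condition gives an inradius bounded below by roughly $L_K$, so $R(K^\circ)\lesssim 1/L_K$. The key input is the Bizeul--Klartag estimate for the mean gauge of an isotropic body, $\E\|G\|_K\lesssim \sqrt{\log n}\cdot(\text{third-moment parameter})/L_K$. Letwin's bound makes that parameter dimension-free, so $w(K^\circ)\lesssim\sqrt{\log n}/L_K$. Applying the same reasoning to $L^\circ$ bounds $w(L)$ and $R(L)$ by $\sqrt{\log n}/L_{L^\circ}$ and $1/L_{L^\circ}$ respectively, up to normalization.

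\textbf{Main obstacle.} The hardest part is the bookkeeping of normalizations: the factors of $L_K$, $L_{L^\circ}$ and $\sqrt n$ coming from isotropic scaling must cancel against the volume term. Using $|K|=1$ together with the isotropic scaling, the Chevet bound should become
\[
\E t\lesssim \sqrt{\log n}\,/\,\sqrt n
\]
times a product of isotropic constants. Combined with the volume factor $\simeq n$, this should give
\[
\vr(K,L)\lesssim\sqrt{n\log n}.
\]
The step I expect to need the most care is making the isotropic constants cancel exactly rather than leave an $L_K$ factor. Klartag's bound $L_K\lesssim\sqrt{\log n}$ would lose too much, so I would first check whether each Chevet term contains $L_K$ once in the numerator from the volume and once in the denominator from the gauge estimate. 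Failing that, I would use the recent bound $L_K\le C$ to absorb the remainder.

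Finally, since $\E t$ is bounded as above, some realization $U$ attains $t\le\E t$, which proves Theorem~\ref{thm:main}.
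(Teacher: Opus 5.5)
\paragraph{Overall.} Your architecture is the paper's: isotropic positions for $K$ and $L^\circ$ with $|K|=|L^\circ|=1$, the orthogonal Chevet inequality, the Bizeul--Klartag mean-gauge bound with Letwin's $\kappa_n\le 2\sqrt2$, and Bourgain--Milman for $|L|^{-1/n}\le Cn$. The gap is in the closing step you flag as the main obstacle: both remedies you propose fail.

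\paragraph{The isotropic constants.} Your own estimates give
$R(K^\circ)\lesssim 1/L_K$, $R(L)\lesssim 1/L_{L^\circ}$, $\E\|G\|_K\lesssim\sqrt{\log n}/L_K$ and $\E\|G\|_{L^\circ}\lesssim\sqrt{\log n}/L_{L^\circ}$. Chevet therefore yields
\[
\E t\lesssim \frac{\sqrt{\log n}}{\sqrt n\,L_K\,L_{L^\circ}} .
\]
The isotropic constants sit in the denominator, and nothing cancels them. The volume factor $|L|^{-1/n}\le Cn$ comes from Bourgain--Milman with $|L^\circ|=1$ and contains no isotropic constant; the condition $|K|=1$ has already been used. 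So looking for cancellation against the volume term will not work. An upper bound on the isotropic constant (Klartag's $\sqrt{\log n}$ or the recent $L_K\le C$) goes the wrong way and cannot absorb a factor $1/L_K$.

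What you need is the classical universal lower bound $L_W\ge c$. It is elementary: $nL_W^2=\int_W|x|^2\,dx\ge\int_D|x|^2\,dx$, where $D$ is the centered Euclidean ball of volume one, and the right-hand side is of order $n$. The paper uses this bound twice. First, it gives $R(K^\circ),R(L)\le C$ from $cL_WB_2^n\subseteq W$. Second, it turns the Bizeul--Klartag estimate $C\kappa_n\sqrt{\log(n+1)}/(L_W\sqrt n)$ into $C\sqrt{\log(n+1)/n}$. With it, your argument closes and no upper bound on $L_K$ is needed.

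\paragraph{Symmetry.} You skip the reduction to the symmetric case and work with non-symmetric $K$ at its centroid and $L$ at its Santal\'o point. That is legitimate, but the orthogonal Chevet inequality you invoke is then not the classical statement for norms. The paper can cite the classical version because it first reduces via Rogers--Shephard (Lemma~\ref{lem:symmetric-reduction}), at the cost of a factor $8$.

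If you keep the non-symmetric route, you must justify the inequality. Two ingredients suffice:
\begin{itemize}
\item The Gaussian Chevet bound follows from Sudakov--Fernique for arbitrary sets containing $0$.
\item To pass to Haar measure, write $G=U(G^*G)^{1/2}$ with $U$ Haar and independent of $(G^*G)^{1/2}$. Then $\E(G^*G)^{1/2}=c_nI$ with $c_n\simeq\sqrt n$, and Jensen applied to the convex functional $T\mapsto\sup_{x\in L,\,y\in K^\circ}\langle Tx,y\rangle$ gives $\E t\le c_n^{-1}\E\sup_{x\in L,\,y\in K^\circ}\langle Gx,y\rangle$.
\end{itemize}
Alternatively, symmetrize first as the paper does.
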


We first reduce the problem to the centrally
symmetric case. For a centrally symmetric convex body
$W\subset\R^n$, let
\[
   X_W=(\R^n,\|\cdot\|_W)
   \qquad\text{and}\qquad
   \ell_2^n=(\R^n,|\cdot|_2),
\]
so that $W$ is the unit ball of $X_W$.

The argument relies on a careful choice of positions in the classical
random-operator method. Given centrally symmetric convex bodies $K,L\subset\mathbb{R}^n$,
let $U$ be Haar-distributed on the orthogonal group $O(n)$. We use
the following classical orthogonal form of Chevet's inequality,
obtained by combining Chevet's Gaussian inequality with the
Marcus--Pisier comparison between Gaussian and orthogonal averages;
see \cite[Proposition~43.6]{TomczakJaegermannBook} or \cite[Section~9.4]{AGM} for a modern exposition:
\begin{equation}\label{eq:orthogonal-chevet}
\E\bigl\|U:X_L\to X_K\bigr\|
\leq
C\Bigl(
R(L)M(K)+R(K^\circ)M(L^\circ)
\Bigr).
\end{equation}
Here
\[
R(W):=\bigl\|\operatorname{id}:X_W\to\ell_2^n\bigr\|
=\max_{x\in W}|x|_2
\]
denotes the Euclidean radius of $W$, while
\[
M(W):=\int_{S^{n-1}}|x|_W\,d\sigma(x)
\]
is its spherical mean gauge, with $\sigma$ denoting the normalized
rotation-invariant measure on $S^{n-1}$.

The key observation is to place $K$ and $L^\circ$ in  isotropic position. This choice is tailored to
the structure of the orthogonal Chevet inequality: it simultaneously
controls the radius terms $R(L)$ and $R(K^\circ)$ and the mean-gauge
terms $M(K)$ and $M(L^\circ)$.

\section{Proof of the main theorem}\label{sec:proof}

We begin by recalling some standard definitions.
A convex body $W\subset\R^n$ is said to be in isotropic position if its barycenter is the origin, $|W|=1$, and \begin{equation*} \int_W \langle x,\theta\rangle^2\,dx =L_W^2|\theta|_2^2 \qquad (\theta\in\R^n),
\end{equation*}
where $L_W$ is the isotropic constant of $W$. We shall use the well-known universal lower bound

\begin{equation}\label{eq:isotropic-lower} L_W\geq C;
\end{equation}
see, for instance, \cite{BGVV}. For a convex body $W$ containing the origin in its interior, we write \[ W^\circ =\{y\in\R^n:\langle x,y\rangle\leq 1 \text{ for every }x\in W\}. \]

We start by reducing the problem to the centrally symmetric case. This argument is standard and was already used by Giannopoulos and Hartzoulaki in \cite{GH} (see also \cite{GMP}). We state it as a lemma and include the proof for completeness.
\begin{lemma}[Reduction to the centrally symmetric case] \label{lem:symmetric-reduction} For every pair of convex bodies $K,L\subset\R^n$, there exist centrally symmetric convex bodies $K_0,L_0\subset\R^n$ such that \[ \vr(K,L)\leq C\,\vr(K_0,L_0). \] \end{lemma}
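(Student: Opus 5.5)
Standard approach: translate so both bodies have barycenter at the origin, then take $K_0=K-K$ (difference body) and $L_0=L\cap(-L)$. By translation invariance \eqref{eq:independent-invariance} of $\vr$ we may assume $K$ and $L$ both have barycenter $0$. The plan is to insert the symmetric bodies between $K$ and $L$ and use the multiplicative triangle inequality:
\[
\vr(K,L)\le \vr(K,K_0')\,\vr(K_0',L_0)\,\vr(L_0,L),
\]
where $K_0'$ is a suitable symmetric body inside $K$, or simply bound the chain directly with explicit inclusions and volume comparisons.

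Concretely, I would take $K_0=K\cap(-K)$ and $L_0=L\cap(-L)$; both are symmetric convex bodies. First, if $T(L_0)\subseteq K_0$ with $T$ linear, I want to recover an affine image of $L$ inside $K$ of comparable volume. Since $L_0\subseteq L$ this goes the wrong way, so instead I use $L\subseteq L-L$ and $K\cap(-K)\subseteq K$. Taking $L_0=L-L$ (difference body) and $K_0=K\cap(-K)$: if $T(L-L)\subseteq K\cap(-K)\subseteq K$, then since $0\in L$ we have $L\subseteq L-L$, so $T(L)\subseteq K$. Therefore
\[
\vr(K,L)\le \Bigl(\tfrac{|K|}{|T(L)|}\Bigr)^{1/n}
=\Bigl(\tfrac{|K|}{|K\cap(-K)|}\Bigr)^{1/n}
\Bigl(\tfrac{|L-L|}{|L|}\Bigr)^{1/n}
\Bigl(\tfrac{|K_0|}{|T(L_0)|}\Bigr)^{1/n}.
\]
Taking the infimum over $T$ gives $\vr(K,L)\le c_1c_2\,\vr(K_0,L_0)$ with $c_1=(|K|/|K\cap(-K)|)^{1/n}$ and $c_2=(|L-L|/|L|)^{1/n}$.

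It then remains to bound the two constants by absolute constants. For $c_2$, the Rogers--Shephard inequality gives $|L-L|\le\binom{2n}{n}|L|\le 4^n|L|$, so $c_2\le4$. For $c_1$, the key input is the Milman--Pajor inequality: if $K$ has barycenter at the origin then $|K\cap(-K)|\ge 2^{-n}|K|$, so $c_1\le 2$. This is the step I expect to be the main (though classical) obstacle, since it genuinely needs the barycenter normalization; Rogers--Shephard works for any position. Combining the two bounds gives $\vr(K,L)\le 8\,\vr(K_0,L_0)$, and the translations made at the start are harmless by \eqref{eq:independent-invariance}.
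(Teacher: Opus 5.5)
Your final argument is correct and is essentially the paper's proof. It uses the same bodies $K_0=K\cap(-K)$ and $L_0=L-L$ (with $K$ centered at its barycenter and $0\in L$), the same volume bounds $(|K|/|K_0|)^{1/n}\le 2$ and $(|L_0|/|L|)^{1/n}\le 4$, and reaches the same constant $8$; the only difference is that you unpack the multiplicative triangle inequality by hand via $T(L)\subseteq T(L-L)\subseteq K_0\subseteq K$, whereas the paper quotes $\vr(K,K_0)\le 2$ and $\vr(L_0,L)\le 4$ directly. Delete the opening false starts ($K_0=K-K$, $L_0=L\cap(-L)$), whose inclusions go the wrong way, and note that your chain of inclusions holds verbatim for affine $T$, so you need not restrict to linear maps.
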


\begin{proof}
By affine invariance, we may assume that the barycenter of $K$ is the
origin and that $0\in L$. Set
\[
   K_0:=K\cap(-K)
   \qquad\text{and}\qquad
   L_0:=L-L.
\]
Both bodies are centrally symmetric. The Rogers--Shephard inequalities
\cite{RS} give
\[
   |K_0|^{1/n}\geq \frac12 |K|^{1/n},
   \qquad
   |L_0|^{1/n}\leq 4|L|^{1/n}.
\]
Since $K_0\subseteq K$ and $L\subseteq L_0$, it follows that
\[
   \vr(K,K_0)\leq 2
   \qquad\text{and}\qquad
   \vr(L_0,L)\leq 4.
\]
Therefore, applying the multiplicative triangle inequality twice,
\[
   \vr(K,L)
   \leq
   \vr(K,K_0)\vr(K_0,L_0)\vr(L_0,L)
   \leq 8\,\vr(K_0,L_0).\qedhere
\]
\end{proof}

The estimate needed below follows by combining the isotropic
mean-gauge bound of Bizeul and Klartag with Letwin's recent
dimension-free control of the third-moment parameter. For $n\geq2$,
let
\begin{equation*}
   \kappa_n
   :=
   \sup
   \left\|
      \E\bigl[\langle X,\theta\rangle X\otimes X\bigr]
   \right\|_{\HS},
\end{equation*}
where the supremum runs over all isotropic log-concave random vectors
$X$ in $\R^n$ and all $\theta\in S^{n-1}$.

\begin{lemma} [Bizeul-Klartag + Letwin]
\label{lem:isotropic-M}
Let $W\subset\R^n$ be a convex body in volume-one isotropic position.
Then
\begin{equation}\label{eq:M-isotropic-final}
   M(W)
   \leq
   C\sqrt{\frac{\log(n+1)}{n}}.
\end{equation}
\end{lemma}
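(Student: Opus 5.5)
The plan is to quote the Bizeul--Klartag mean-gauge estimate for isotropic convex bodies, insert Letwin's uniform bound on the third-moment parameter $\kappa_n$, and then convert from the body's own normalization to the spherical mean gauge $M(W)$. The Bizeul--Klartag bound, as I recall it, controls the expected norm of a standard Gaussian vector $G$ in $\R^n$ with respect to an isotropic convex body $W$ of volume one:
\[
\E\|G\|_W\leq C\,\frac{\sqrt{\log(n+1)}}{L_W}\,\Phi(\kappa_n),
\]
where $\Phi$ is an explicit increasing function of the third-moment parameter. Equivalently, one has $\E\|G\|_W\le C\sqrt{\log n}\cdot(\text{factor depending on }\kappa_n)$ for the body normalized so that its covariance is the identity. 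In their paper the dependence on $\kappa_n$ enters polynomially or logarithmically. Letwin's result gives $\kappa_n\le C$ uniformly in $n$, so that factor becomes an absolute constant.

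The key steps, in order, are as follows.
\begin{enumerate}
\item Write $\tilde W=W/L_W$. Then the uniform measure on $\tilde W$, suitably rescaled, is an isotropic log-concave vector in the covariance-identity sense, and $\kappa_n$ applies to it.
\item Invoke Bizeul--Klartag together with Letwin's bound to obtain
\[
\E\|G\|_{\tilde W}\leq C\sqrt{\log(n+1)}.
\]
\item Use homogeneity: $\|x\|_{W}=\|x\|_{\tilde W}/L_W$. Combined with the universal lower bound \eqref{eq:isotropic-lower}, this gives $\E\|G\|_W\le C\sqrt{\log(n+1)}$.
\item Pass to the sphere by polar integration, using $\E\|G\|_W=\E|G|_2\cdot M(W)$ with $\E|G|_2\ge c\sqrt n$. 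This yields $M(W)\le C\sqrt{\log(n+1)/n}$, which is \eqref{eq:M-isotropic-final}.
\end{enumerate}
The barycenter-at-origin assumption ensures that $\|\cdot\|_W$ is a gauge with the origin in the interior. Symmetry is not needed for the polar-coordinate step.

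The main obstacle is bookkeeping the exact form of the Bizeul--Klartag theorem: which normalization it uses (volume one versus identity covariance) and precisely how $\kappa_n$ and $L_W$ enter. If their bound is stated with $L_W$ in the numerator rather than the denominator, the direction of the rescaling must be checked carefully. In that case I would use $L_W\ge c$ from \eqref{eq:isotropic-lower} in place of any upper bound on $L_W$, since only the lower bound is available in the cited form. It also needs to be confirmed that their estimate holds for non-symmetric bodies, or else the lemma should be applied only to the symmetric bodies $K$ and $L^\circ$ arising after Lemma~\ref{lem:symmetric-reduction}. Once the constants from Letwin are substituted, the remaining steps are routine.
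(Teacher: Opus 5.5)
Your proposal is correct and follows essentially the same route as the paper. The paper cites the Bizeul--Klartag mean-gauge bound in the form $M(W)\le C\kappa_n\sqrt{\log(n+1)}/(L_W\sqrt n)$, which is linear in $\kappa_n$, inserts Letwin's bound $\kappa_n\le 2\sqrt2$, and uses $L_W\ge c$; your Gaussian formulation combined with $\E\|G\|_W=\E|G|_2\,M(W)$ is an equivalent way of stating the same estimate. Your own Step 3 already shows that passing from the covariance-one to the volume-one normalization puts $L_W$ in the denominator, so the contingency you raise does not occur and only the lower bound on $L_W$ is needed.
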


\begin{proof}
By the mean-gauge estimate of Bizeul and Klartag
\cite[Lemma~4.1]{BizeulKlartag}, after rescaling from their
covariance-one normalization to our volume-one isotropic
normalization, we have
\begin{equation}\label{eq:M-isotropic-kappa}
   M(W)
   \leq
   C\frac{\kappa_n\sqrt{\log(n+1)}}{L_W\sqrt n}.
\end{equation}
On the other hand, Letwin proves, as part of the proof of
\cite[Theorem~1.1]{Letwin}, that
\begin{equation}\label{eq:kappa-bound}
   \kappa_n\leq 2\sqrt2.
\end{equation}
Combining these estimates with the universal lower bound $L_W\geq C$
proves \eqref{eq:M-isotropic-final}.
\end{proof}

We are now ready to prove our main theorem.

\begin{proof}[Proof of Theorem~\ref{thm:main}]
By Lemma~\ref{lem:symmetric-reduction}, it is enough to consider
centrally symmetric bodies $K$ and $L$.

Choosing a linear position for $L^\circ$ is equivalent to choosing the
corresponding position for $L$, since
\[
   \bigl((T^*)^{-1}L\bigr)^\circ=T(L^\circ),
   \qquad T\in GL(n).
\]
Thus, by the affine invariance of the volume ratio
\eqref{eq:independent-invariance}, we may place $K$ and $L^\circ$ in
isotropic position. In
particular,
\begin{equation}\label{eq:normalizations-proof}
   |K|=|L^\circ|=1.
\end{equation}

For every isotropic body $W$, the standard inclusion
\[
   CL_W B_2^n\subseteq W
\]
holds; see, for instance, \cite{AGM}. Passing to polars and using the
universal lower bound \eqref{eq:isotropic-lower} $L_W\geq C$, we obtain
\[
   W^\circ\subseteq C B_2^n.
\]
Applying this first to $W=K$ and then to $W=L^\circ$ gives
\begin{equation}\label{eq:radius-control}
   R(K^\circ)\leq C,
   \qquad
   R(L)=R\bigl((L^\circ)^\circ\bigr)\leq C.
\end{equation}
Moreover, by \eqref{eq:M-isotropic-final},
\begin{equation}\label{eq:M-control}
   M(K)+M(L^\circ)
   \leq
   C\sqrt{\frac{\log(n+1)}{n}}.
\end{equation}

Let $U$ be Haar-distributed on $O(n)$. Applying Chevet's inequality \eqref{eq:orthogonal-chevet}, and using
\eqref{eq:radius-control} and \eqref{eq:M-control}, we obtain
\begin{align*}
   \E\bigl\|U:X_L\to X_K\bigr\|
   &\leq
   C\bigl(
      R(L)M(K)+R(K^\circ)M(L^\circ)
   \bigr)\\
   &\leq
   C\sqrt{\frac{\log(n+1)}{n}}.
\end{align*}
Consequently, there exists $U\in O(n)$ such that
\begin{equation}\label{eq:a-bound}
   a_{L,K}:=\bigl\|U:X_L\to X_K\bigr\|
   \leq
   C\sqrt{\frac{\log(n+1)}{n}}.
\end{equation}
Since $U(L)\subseteq a_{L,K}K$, we have \[ a_{L,K}^{-1}U(L)\subseteq K. \] Moreover, since $U$ is orthogonal,  $|\det U|=1$ and hence \[ |a_{L,K}^{-1}U(L)|=a_{L,K}^{-n}|L|. \] Therefore, by the definition of the volume ratio and \eqref{eq:normalizations-proof}, \begin{equation}\label{eq:vr-before-santalo} \vr(K,L) \leq \left(\frac{|K|}{|a_{L,K}^{-1}U(L)|}\right)^{1/n} = a_{L,K}\left(\frac{|K|}{|L|}\right)^{1/n} = a_{L,K}\,|L|^{-1/n}. \end{equation}
Finally, the reverse Santaló inequality of Bourgain and Milman
\cite{BM} gives
\[
   \bigl(|L||L^\circ|\bigr)^{1/n}
   \geq
   C\,|B_2^n|^{2/n}
   \geq
   \frac{C}{n}.
\]
Since $|L^\circ|=1$, it follows that
\begin{equation}\label{eq:L-volume}
   |L|^{-1/n}\leq Cn.
\end{equation}
Combining \eqref{eq:a-bound}, \eqref{eq:vr-before-santalo}, and
\eqref{eq:L-volume}, we conclude that
\[
   \vr(K,L)
   \leq
   Cn\sqrt{\frac{\log(n+1)}{n}}
   =
   C\sqrt{n\log(n+1)}.
\]
The general case follows from Lemma~\ref{lem:symmetric-reduction},
after adjusting the universal constant.
\end{proof}

\begin{remark}[On the remaining logarithm]\label{rem:log} The factor $\sqrt{\log(n+1)}$ arises naturally in the Haar-average argument used above. Indeed, let \[ K=L^\circ=\frac12 B_\infty^n, \qquad L=2B_1^n. \] If $U=(u_{ij})\in O(n)$, then $\bigl\|U:X_L\to X_K\bigr\| = 4\max_{i,j}|u_{ij}|.$
Writing $W_n=\max_{i,j}|u_{ij}|$, Jiang \cite{Jiang} proved that
$\sqrt{n/\log n}\,W_n$ converges to $2$ in probability; together
with the corresponding tail estimates, this yields
$$\E W_n \asymp
\sqrt{\frac{\log(n+1)}{n}}.$$
On the other hand, \[ \vr(B_\infty^n,B_1^n)\leq C\sqrt n, \] as follows, for instance, from the Dvoretzky--Rogers theorem \cite[Theorem~5A]{DvoretzkyRogers}; see also \cite[Proof of Theorem~1.3]{GMPsimplex} or \cite{PelczynskiSzarek}. Thus the Haar first moment is not sharp even for this particular pair. This example gives no indication, however, as to the optimal general order of $\vr(K,L)$. It remains open whether the logarithmic factor in our bound can be removed, or whether there exist pairs of convex bodies $K,L\subset\R^n$ for which $\vr(K,L)$ grows faster than $\sqrt n$. \end{remark}

\subsection*{Note added}
After the first version of this article was completed, we became aware of two remarkable recent preprints by Bizeul \cite{BizeulMMstar} and by Paouris and Pathak \cite{PaourisPathak}. Neither of these works considers the volume-ratio problem studied here, and our estimate does not appear explicitly in either of them. Nevertheless, their results can be combined with the orthogonal Chevet argument of the present paper to provide alternative routes to our main estimate.

Indeed, using Bizeul's recent Theorem~1.1 in \cite{BizeulMMstar},
one could instead place both $K$ and $L$ in isotropic position.
The term $M(K)$ can then be estimated exactly as in
Lemma~\ref{lem:isotropic-M}, while Theorem~1.1 of
\cite{BizeulMMstar} provides the required estimate for
$M(L^\circ)=M^*(L)$. In this case one uses the standard isotropic
radius estimate $
R(L) \leq Cn$, while $R(K^\circ)\leq C$ as before. Inserting these estimates
into the same orthogonal Chevet argument again yields the order
$\sqrt{n\log(n+1)}$.

There is also a different route using the results of Paouris and Pathak.
Namely, one may place $K$ and $L^\circ$ in Bobkov's maximal Gaussian measure position, normalized so that
$|K|=|L^\circ|=1$. By Lemma~2.2 of \cite{PaourisPathak}, this gives
\[
R(K^\circ)+R(L)\leq C.
\]
On the other hand, applying Theorem~6.1 of \cite{PaourisPathak} to
$K^\circ$ and $L$, respectively, yields
\[
M(K)+M(L^\circ)
\leq
C\sqrt{\frac{\log(n+1)}{n}}.
\]
These are precisely the estimates needed in the orthogonal Chevet inequality.
Combining them with the same Chevet and reverse Santal\'o argument used above
again gives the order $\sqrt{n\log(n+1)}$.

These two preprints appeared after the present manuscript had been completed and illustrate the remarkably rapid recent progress in asymptotic convex geometry.

\section*{Use of AI tools}

No AI tools were used in the development, verification, or validation of the mathematical ideas, arguments, proofs, or results presented in this article. Generative AI tools were used solely for language editing and proofreading.

\section*{Acknowledgments}

The authors are grateful to Alexander Litvak for his helpful comments concerning appropriate references for some of the results used in the paper.


\begin{thebibliography}{99}

\bibitem{AGM}
S.~Artstein-Avidan, A.~Giannopoulos and V.~D.~Milman,
\emph{Asymptotic Geometric Analysis, Part I},
Math. Surveys Monogr. 202, American Mathematical Society, Providence, RI,
2015.


\bibitem{Ball}
K.~Ball,
Volume ratios and a reverse isoperimetric inequality,
\emph{J. London Math. Soc. (2)} \textbf{44} (1991), 351--359.

\bibitem{Barthe}
F.~Barthe,
On a reverse form of the Brascamp--Lieb inequality,
\emph{Invent. Math.} \textbf{134} (1998), 335--361.



\bibitem{BizeulMMstar}
P.~Bizeul,
Optimal $MM^*$ bounds for convex bodies,
arXiv:2607.29458 (2026).

\bibitem{BizeulKlartag}
P.~Bizeul and B.~Klartag,
Distances between non-symmetric convex bodies: optimal bounds up to
polylog,
\emph{arXiv preprint arXiv:2510.20511}, 2026.

\bibitem{BM}
J.~Bourgain and V.~D.~Milman,
New volume ratio properties for convex symmetric bodies in $\R^n$,
\emph{Invent. Math.} \textbf{88} (1987), 319--340.




\bibitem{BGVV} S.~Brazitikos, A.~Giannopoulos, P.~Valettas, and B.-H.~Vritsiou, \emph{Geometry of Isotropic Convex Bodies}, Mathematical Surveys and Monographs, vol.~196, American Mathematical Society, Providence, RI, 2014.
\bibitem{DvoretzkyRogers}
A.~Dvoretzky and C.~A.~Rogers,
Absolute and unconditional convergence in normed linear spaces,
\emph{Proc. Natl. Acad. Sci. USA} \textbf{36} (1950),
no.~3, 192--197.

\bibitem{GMPLitvak}
D.~Galicer, A.~E.~Litvak, M.~Merzbacher, and D.~Pinasco,
On the volume ratio of projections of convex bodies,
\emph{J. Funct. Anal.} \textbf{286} (2024), no.~3, 110242.

\bibitem{GMP}
D.~Galicer, M.~Merzbacher and D.~Pinasco,
Asymptotic estimates for the largest volume ratio of a convex body,
\emph{Rev. Mat. Iberoam.} \textbf{37} (2021), 2347--2372.

\bibitem{GMPsimplex}
D.~Galicer, M.~Merzbacher, and D.~Pinasco,
The minimal volume of simplices containing a convex body,
\emph{J. Geom. Anal.} \textbf{29} (2019),
no.~1, 717--732.



\bibitem{GH}
A.~Giannopoulos and M.~Hartzoulaki,
On the volume ratio of two convex bodies,
\emph{Bull. London Math. Soc.} \textbf{34} (2002), 703--707.

\bibitem{GPT}
A.~Giannopoulos, I.~Perissinaki and A.~Tsolomitis,
John's theorem for an arbitrary pair of convex bodies,
\emph{Geom. Dedicata} \textbf{84} (2001), 63--79.

\bibitem{Gluskin} E.~D.~Gluskin, The diameter of the Minkowski compactum is approximately equal to $n$, \emph{Funct. Anal. Appl.} \textbf{15} (1981), no.~1, 57--58.

\bibitem{GLMP}
Y.~Gordon, A.~E.~Litvak, M.~Meyer and A.~Pajor,
John's decomposition in the general case and applications,
\emph{J. Differential Geom.} \textbf{68} (2004), 99--119.

\bibitem{Jiang}
T.~Jiang,
Maxima of entries of Haar distributed matrices,
\emph{Probab. Theory Related Fields} \textbf{131} (2005), 121--144.


\bibitem{Khrabrov}
A.~Khrabrov,
Generalized volume ratios and the Banach--Mazur distance,
\emph{Math. Notes} \textbf{70} (2001), 838--846.


\bibitem{Letwin}
B.~Letwin,
The KLS constant is $O(\log^{1/4} n)$,
\emph{arXiv preprint arXiv:2607.24164}, 2026.


\bibitem{PaourisPathak}
G.~Paouris and R.~Pathak,
Optimal mean width and metric entropy estimates for convex bodies,
arXiv:2607.29522 (2026).

\bibitem{PelczynskiSzarek}
A.~Pe{\l}czy\'nski and S.~J.~Szarek,
On parallelepipeds of minimal volume containing a convex symmetric
body in $\R^n$,
\emph{Math. Proc. Cambridge Philos. Soc.} \textbf{109} (1991),
no.~1, 125--148.

\bibitem{PisierVolume}
G.~Pisier,
\emph{The Volume of Convex Bodies and Banach Space Geometry},
Cambridge Tracts in Mathematics, vol.~94,
Cambridge University Press, Cambridge, 1989.

\bibitem{RS}
C.~A.~Rogers and G.~Shephard,
The difference body of a convex body,
\emph{Arch. Math. (Basel)} \textbf{8} (1957), 220--233.


\bibitem{Rudelson}
M.~Rudelson,
Distances between non-symmetric convex bodies and the $MM^*$-estimate,
\emph{Positivity} \textbf{4} (2000), 161--178.


\bibitem{TomczakJaegermannBook}
N.~Tomczak-Jaegermann,
\emph{Banach--Mazur Distances and Finite-Dimensional Operator Ideals},
Pitman Monographs and Surveys in Pure and Applied Mathematics,
vol.~38, Longman Scientific \& Technical, Harlow;
copublished in the United States with John Wiley \& Sons, Inc.,
New York, 1989.

\end{thebibliography}
\end{document}